\documentclass[11pt]{article}

% This is my basic toolkit; things I will always want.

% First, some packages:
\usepackage{epsfig}
\usepackage{graphicx}
\usepackage{amsbsy}
\usepackage{amsmath}
\usepackage{amsfonts}
\usepackage{amssymb}
\usepackage{textcomp}
\usepackage{hyperref}
\usepackage{aliascnt}

% Then, some commands I'll need:
% Nicked  from Leinster:
\newcommand{\mcm}[3]{\newcommand{#1}[#2]{{\ensuremath{#3}}}} 

\mcm{\tuple}{1}{\langle #1 \rangle}
\mcm{\name}{1}{\ulcorner #1 \urcorner}
\mcm{\Nbb}{0}{\mathbb{N}}
\mcm{\Zbb}{0}{\mathbb{Z}}
\mcm{\Rbb}{0}{\mathbb{R}}
\mcm{\Cbb}{0}{\mathbb{C}}
\mcm{\Qbb}{0}{\mathbb{Q}}
\mcm{\Acal}{0}{\cal A}
\mcm{\Bcal}{0}{\cal B}
\mcm{\Ccal}{0}{\cal C}
\mcm{\Dcal}{0}{\cal D}
\mcm{\Ecal}{0}{\cal E}
\mcm{\Fcal}{0}{\cal F}
\mcm{\Gcal}{0}{\cal G}
\mcm{\Hcal}{0}{\cal H}
\mcm{\Ical}{0}{\cal I}
\mcm{\Jcal}{0}{\cal J}
\mcm{\Kcal}{0}{\cal K}
\mcm{\Lcal}{0}{\cal L}
\mcm{\Mcal}{0}{\cal M}
\mcm{\Ncal}{0}{\cal N}
\mcm{\Ocal}{0}{{\cal O}}
\mcm{\Pcal}{0}{{\cal P}}
\mcm{\Qcal}{0}{{\cal Q}}
\mcm{\Rcal}{0}{{\cal R}}
\mcm{\Scal}{0}{{\cal S}}
\mcm{\Tcal}{0}{{\cal T}}
\mcm{\Ucal}{0}{{\cal U}}
\mcm{\Vcal}{0}{{\cal V}}
\mcm{\Wcal}{0}{{\cal W}}
\mcm{\Xcal}{0}{{\cal X}}
\mcm{\Ycal}{0}{{\cal Y}}
\mcm{\Zcal}{0}{{\cal Z}}
\mcm{\Mfrak}{0}{\mathfrak M}

\mcm{\restric}{0}{\upharpoonright}
\mcm{\upset}{0}{\uparrow}
\mcm{\onto}{0}{\twoheadrightarrow}
\mcm{\smallNbb}{0}{{\small \mathbb{N}}}
\DeclareMathOperator{\preop}{op}
\mcm{\op}{0}{^{\preop}}

%I like this environment, which is one of Leinster's:
%
{\begin{array}{c}
\setlength{\unitlength}{1em}}%
{\end{array}}

%Finally, I'll want the following theorem structure:
\usepackage{amsthm}

\newcommand{\theoremize}[2]{\newaliascnt{#1}{thm} \newtheorem{#1}[#1]{#2} \aliascntresetthe{#1}}

\newtheorem*{thm*}{Theorem}

\theoremstyle{plain}
\newtheorem{thm}{Theorem}[section]
\theoremize{lem}{Lemma}
\theoremize{skolem}{Skolem}
\theoremize{fact}{Fact}
\theoremize{sublem}{Sublemma}
\theoremize{claim}{Claim}
\theoremize{obs}{Observation}
\theoremize{prop}{Proposition}
\theoremize{cor}{Corollary}
\theoremize{que}{Question}
\theoremize{oque}{Open Question}
\theoremize{con}{Conjecture}

\theoremstyle{definition}
\theoremize{dfn}{Definition}
\theoremize{rem}{Remark}
\theoremize{eg}{Example}
\theoremize{exercise}{Exercise}
\theoremstyle{plain}

\usepackage{verbatim}
\usepackage{enumerate}
\usepackage{cutwin}
\usepackage{listings}
\usepackage[all]{xy}
\usepackage{subfig}
\usepackage{pdfpages}
\usepackage{bm}

\usepackage{subfig}

\title{A characterisation of 3-colourable 3-dimensional triangulations}

\author{Johannes Carmesin, Emily Nevinson, Bethany Saunders
\medskip 
\\
  {University of Birmingham}
}

\usepackage{xcolor}

\definecolor{codegreen}{rgb}{0,0.6,0}
\definecolor{codegray}{rgb}{0.5,0.5,0.5}
\definecolor{codepurple}{rgb}{0.58,0,0.82}
\definecolor{backcolour}{rgb}{0.95,0.95,0.92}

\lstdefinestyle{mystyle}{
    backgroundcolor=\color{backcolour},   
    commentstyle=\color{codegreen},
    keywordstyle=\color{magenta},
    numberstyle=\tiny\color{codegray},
    stringstyle=\color{codepurple},
    basicstyle=\ttfamily\footnotesize,
    breakatwhitespace=false,         
    breaklines=true,                 
    captionpos=b,                    
    keepspaces=true,                 
    numbers=left,                    
    numbersep=5pt,                  
    showspaces=false,                
    showstringspaces=false,
    showtabs=false,                  
    tabsize=2
}

\lstset{style=mystyle}

\mcm{\Fbb}{0}{\mathbb{F}}

\usepackage{xr}

\begin{document}

\maketitle
\begin{abstract}
We extend Heawood's theorem on the colourability of plane triangulations to triangulations of 3-space. We prove that a triangulation of 3-space can be edge coloured with three colours if and only if all edges have even degree.
\end{abstract}

\section{Introduction}
In 1898, Heawood proved that a maximal plane triangulation is vertex colourable in three colours if and only if all its vertices have even degrees \cite{heawood1898four}. In this paper we prove a 3-dimensional analogue of this theorem. 

In fact there is quite a natural way to extend theorems about planar graphs to 3-space. Indeed, each 2-dimensional simplicial complex (which we will refer to as a \textit{2-complex} from now) embedded in 3-space has a planar link graph\footnote{The \emph{link graph} at a vertex $v$ of a 2-complex $C$ is the graph $L(v)$ on the edges incident with $v$ in $C$, there is an edge between two vertices in $L(v)$ if they share a face at $v$ in $C$.} at each of its vertices. Hence we are interested in global statements of the simplicial complex that project down to the theorem we are trying to extend in each of its link graphs, see \cite{carmesin2017embedding, carmesin2017embedding2, carmesin2017embedding3, carmesin2017embedding4, carmesin2017embedding5} for details.

A \textit{(proper) edge colouring} of a 2-complex $C$ is a labelling of each of the edges of $C$ such that no two edges that share a face have the same label.
The \textit{(face-)degree} of an edge $e$ in a 2-complex $C$ is the number of faces of $C$ that $e$ is incident with.

\begin{figure}[h]
    \centering
    \includegraphics[width=0.36\textwidth]{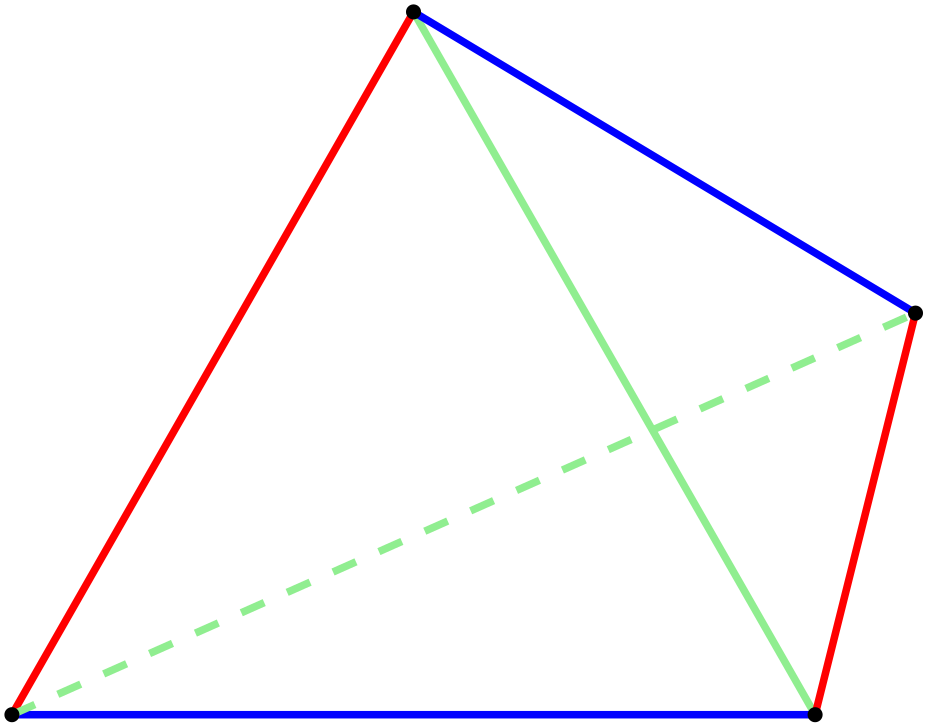}
    \caption{A 3-edge colouring of a tetrahedron. This is an example of a spatial triangulation where all the edges have even degree.}
    \label{fig:3edgecol}
\end{figure}

Intuitively speaking, Heawood's theorem says that local 3-colourings of the faces extend to global 3-colourings of plane triangulations. We extend this \emph{Heawood principle} even further; these 3-colourings of the link graphs can be simultaneously extended to global edge colourings of 2-complexes, as follows: 
\begin{thm*}
    A triangulation of 3-space\footnote{Here, a \emph{triangulation of 3-space} would be a 2-complex where all of the chambers are tetrahedrons.} can be edge coloured with three colours if and only if all edges have even degree.
\end{thm*}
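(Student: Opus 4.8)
The plan is to prove the two implications by rather different means: necessity by restricting a $3$-edge-colouring to the link graphs and quoting Heawood, and sufficiency by a ``developing'' argument that uses the even-degree hypothesis together with the simple-connectivity of $3$-space.

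For necessity, suppose the triangulation $C$ of $\Sbb^3$ carries a proper $3$-edge-colouring $c$, and fix a vertex $v$. The first step is to observe that the restriction of $c$ to the edges of $C$ at $v$ is a proper $3$-colouring of the link graph $L(v)$: two edges at $v$ that are adjacent in $L(v)$ share a face of $C$ and so receive different colours, while conversely any two edges of $C$ sharing a face share an endpoint. Now $L(v)$ is the link of a vertex of a triangulated $3$-manifold, hence a triangulation of $\Sbb^2$, i.e.\ a maximal plane graph, so Heawood's theorem forces every vertex of $L(v)$ to have even degree. Since the degree in $L(v)$ of the vertex arising from an edge $e\ni v$ of $C$ equals the number of faces of $C$ through $e$, and every edge of $C$ meets some vertex, all edges of $C$ have even degree.

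For sufficiency, assume every edge of $C$ has even degree. The starting observation is that a single tetrahedron has an essentially unique proper $3$-edge-colouring --- its three pairs of opposite edges are the three colour classes --- and that colouring any one of its four faces rainbow forces the colours of its remaining three edges. Hence, fixing a colouring of one tetrahedron $\sigma_0$ and crossing shared triangular faces, a colouring is forced along any walk in the dual graph of $C$ (vertices $=$ tetrahedra, edges $=$ shared triangles), producing a colouring of the tetrahedron at the end of the walk. The decisive local fact is that transporting a colouring once around the link of an edge $e$ --- a cycle whose length is the face-degree of $e$ --- returns the colouring one started with: around $e$ the two colours at the endpoints of $e$ stay fixed, while the colours of the other two edges of each incident face, read around $e$, alternate between the two colours distinct from that of $e$, and this alternation closes up precisely because the face-degree of $e$ is even.

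The heart of the matter, and where I expect the real work to lie, is to upgrade this to a walk-independent transport. By the previous step the transport is unchanged when a walk is pushed across a $2$-cell of the dual $2$-complex $D$ of $C$ --- the complex obtained from the dual graph by attaching, for each edge of $C$, a $2$-cell along its link cycle --- so it factors through an action of $\pi_1(D)$ on the six colourings of $\sigma_0$. Since $D$ is the $2$-skeleton of the CW structure on $\Sbb^3$ dual to the triangulation, $\pi_1(D)\cong\pi_1(\Sbb^3)$ is trivial, so every loop acts trivially and the transport depends only on its endpoints. Each tetrahedron then acquires a well-defined colouring, any two of these agree on a shared face, and, as any two tetrahedra through an edge $e$ are joined by a walk around $\mathrm{link}(e)$ along which the colour of $e$ is constant, every edge of $C$ acquires a well-defined colour; this is a proper $3$-edge-colouring since each face lies in a tetrahedron and is rainbow there. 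It is worth recording the equivalent cohomological picture, which also explains why simple-connectivity is the natural hypothesis: identifying the three colours with the three nonzero vectors of $\mathbb{Z}_2^2$, a proper $3$-edge-colouring is precisely a nowhere-zero $1$-cocycle of $C$ with $\mathbb{Z}_2^2$ coefficients, and since $H^1(\Sbb^3;\mathbb{Z}_2^2)=0$ such a cocycle is the same thing as a proper $4$-colouring of the $1$-skeleton of $C$.
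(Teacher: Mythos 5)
Your proposal is correct, and while the forward direction (restricting the colouring to link graphs and invoking Heawood) is exactly the paper's argument, your sufficiency proof takes a genuinely different route. The paper first uses even degrees to make the dual graph bipartite (\autoref{bipartite}) and thereby orients all faces consistently on each tetrahedron; it then passes to the directed spatial line graph, shows every cycle there has effective length $0 \bmod 3$ by building an auxiliary simply connected 2-complex on the tetrahedron cycles, and finishes with the co-flow/tension-to-colouring correspondence (\autoref{euldig_general}, \autoref{euldig}, \autoref{colouringL}). You instead exploit the rigidity of the 3-edge-colouring of a single tetrahedron and run a holonomy argument on the dual graph: crossing a face transports colourings, the holonomy around the link of an edge is trivial precisely because its face-degree is even, and path-independence follows because the dual 2-skeleton carries $\pi_1(\Sbb^3)=1$. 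The two uses of the hypothesis are notably different — in the paper even degree enters only through bipartiteness of the dual graph (to orient faces), in your proof it is directly the condition killing the local monodromy — and your argument is more self-contained (no face orientations, no co-flows, no matroid lemma), with the cohomological remark at the end recovering the paper's \autoref{4col} essentially for free. What the paper's abelian formulation buys is that it only needs the tetrahedron cycles to generate the cycle space mod 3, a homological condition, which is why the concluding remarks note the result for general homology spheres; your nonabelian transport as written needs genuine simple connectivity, though since the holonomy representation is $S_3$-equivariant and hence lands in $S_3$, a perfect fundamental group would also force it to be trivial, so the extension is available with one extra observation. Do note that you implicitly use that the dual graph is connected, that each triangle lies in exactly two tetrahedra, and that the tetrahedra around an edge form a single cycle; these are standard for triangulations of $\Sbb^3$ and are assumed at the same level of explicitness in the paper itself.
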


The $n$-dimensional version of this theorem was claimed without proof in the 70s \cite{edwards1977amusing}, however we do not agree that it is as simple of a result as they believed. Indeed, another paper \cite{goodman1978even} claims to prove the 3-dimensional case, however their argument does not seem to work. For further details on this, see the concluding remarks. 

For basics and background, refer to Diestel's book on graph theory \cite{diestel2016graph}, Hatcher's book on Algebraic topology \cite{hatcher2000algebraic}, and the paper series on the 3-dimensional Kuratowski embeddings \cite{carmesin2017embedding, carmesin2017embedding2, carmesin2017embedding3, carmesin2017embedding4, carmesin2017embedding5}.

\section{3-Colourability of Eulerian 2-complexes}\label{3col}
\begin{dfn}
The \textit{Spatial Line Graph} $L(\mathcal{K})$ of a 2-complex $\mathcal{K}$ represents the face adjacencies of the edges of the 2-complex. The vertex set of $L(\mathcal{K})$ consists of the edges of $\mathcal{K}$, if two edges in $\mathcal{K}$ have a face in common, their corresponding vertices in $L(\mathcal{K})$ are adjacent.
\end{dfn}

\begin{figure}[h]
    \centering
    \includegraphics[width=0.9\textwidth]{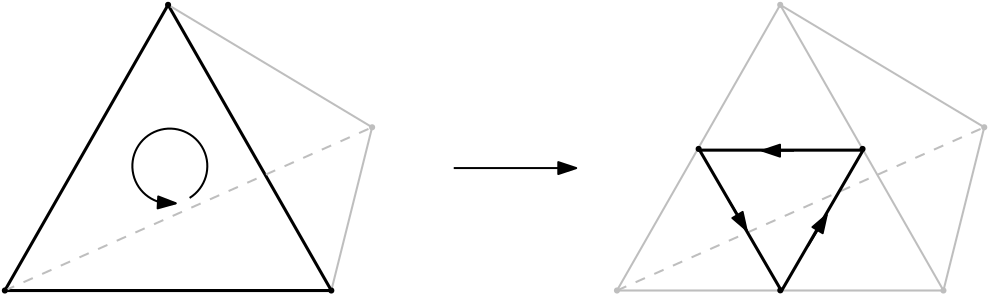}
    \caption{In a directed spacial line graph, for a face with orientation as in the left tetrahedron, we give the edges of the spatial line graph the directions as in the right tetrahedron.}
    \label{fig:directedslg}
\end{figure}

\begin{dfn}
The \textit{Directed Spatial Line Graph}, $DL(\mathcal{K})$, of a 2-complex $\mathcal{K}$ whose faces have an orientation is a directed graph. The directed spacial line graph is the spacial line graph of $\mathcal{K}$ with the added property that the edge joining two vertices in $DL(\mathcal{K})$ is directed with the orientation of the face that the edges are incident to in $\mathcal{K}$, as in \autoref{fig:directedslg}.
\end{dfn}

\begin{dfn}
The \textit{tetrahedron face cycles} in a spacial line graph of a 2-complex are the cycles whose vertices correspond to the three edges that bound a face on a tetrahedron in the 2-complex. 

The \textit{tetrahedron vertex cycles} in a spacial line graph of a 2-complex are the cycles whose vertices correspond to the three edges that are adjacent to a common vertex in a tetrahedron in the 2-complex. 

We refer to the tetrahedron face cycles and the tetrahedron vertex cycles just as the \textit{tetrahedron cycles}.
\end{dfn}

\begin{figure}[h]
    \centering
    \includegraphics[width=0.95\textwidth]{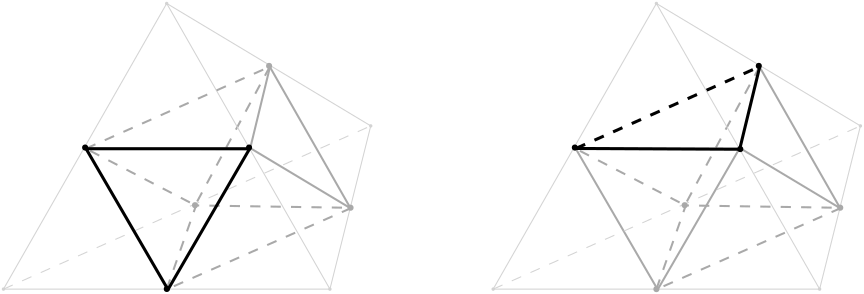}
    \caption{A tetrahedron face cycle (left) and a tetrahedron vertex cycle (right), they are both called tetrahedron cycles.}
    \label{fig:tetcycles}
\end{figure}

\begin{dfn}
An orientation of the faces of a planar graph is \textit{consistent} if each edge has opposite directions in the orientations chosen for each of its two adjacent faces.
\end{dfn}

\begin{lem}
    The incidence vectors of the edges of a 2-complex generate all cycles of the dual matroid\footnote{For the definition of a \emph{dual matroid} see \cite{carmesin2017embedding4}.} over any field, in particular over $\mathbb{F}_2$.
\end{lem}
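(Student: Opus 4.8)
The plan is to recognise the matroid $M(\mathcal{K})$ of the 2-complex as the column matroid of its boundary matrix, and then read the claim straight off the standard description of the cycle space of a dual matroid. Fix a field $\mathbb{F}$ and, if $\mathbb{F}\neq\mathbb{F}_2$, an orientation of the faces of $\mathcal{K}$; let $\partial$ be the $\mathbb{F}$-matrix whose rows are indexed by the edges and whose columns are indexed by the faces of $\mathcal{K}$, with $(e,f)$-entry the (signed) incidence number of $e$ in the triangle $f$. I would first record that $M(\mathcal{K})$ is, by definition (as set up in \cite{carmesin2017embedding4}), the matroid represented over $\mathbb{F}$ by the columns of $\partial$; re-orienting a face or an edge only multiplies a column or a row of $\partial$ by $-1$, so neither this matroid nor its dual depends on the orientation chosen, and over $\mathbb{F}_2$ no orientation is needed at all.

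Next I would invoke the standard fact from matroid theory: if a matroid $N$ is represented over $\mathbb{F}$ by a matrix $A$, then the cycle space of $N$ — the $\mathbb{F}$-span of its (signed) circuits — is the null space $\{x : Ax = 0\}$, while the cycle space of the dual matroid $N^{*}$ is the row space of $A$, these two subspaces of $\mathbb{F}^{E(N)}$ being orthogonal complements. Applying this with $N=M(\mathcal{K})$ and $A=\partial$ identifies the cycle space of $M(\mathcal{K})^{*}$ with the row space of $\partial$.

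It then remains only to observe that the row of $\partial$ indexed by an edge $e$ is, by construction, precisely the incidence vector of $e$: its $f$-coordinate is nonzero exactly when $e$ lies on the face $f$. Thus the edge incidence vectors are the rows of $\partial$, and since the rows span the row space, they generate the cycle space of $M(\mathcal{K})^{*}$ over $\mathbb{F}$; taking $\mathbb{F}=\mathbb{F}_2$ yields the highlighted special case. I expect no genuine mathematical obstacle here; the only point requiring care is bookkeeping — confirming that the matroid and its dual as imported from \cite{carmesin2017embedding4} really do coincide with the column matroid of $\partial$ and its dual (rather than, say, the row matroid), so that the duality is applied in the right direction. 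If the dual matroid there is introduced by some other route, one would first cite or establish that it equals $M(\mathcal{K})^{*}$, after which the argument above goes through unchanged.
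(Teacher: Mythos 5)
Your argument is correct, but be aware that the paper itself offers no proof of this lemma: its ``proof'' is the single line ``See \cite{carmesin2017embedding4}'', deferring both the definition of the dual matroid and the statement itself to that paper. What you have written out is essentially the argument that the citation stands in for: the dual matroid of a 2-complex is the matroid dual of the column matroid of the edge--face boundary matrix $\partial$ (ground set the faces), so the cycle space of the dual equals the cocycle space of that column matroid, which is the row space of $\partial$, and the rows of $\partial$ are exactly the edge incidence vectors. Two small points of care, both of which you at least partially flag yourself: over fields other than $\mathbb{F}_2$, ``cycles'' must be read as signed circuit vectors (elementary vectors of the row space), and the represented matroid --- hence its dual --- may in general depend on the field, so ``over any field'' should be understood as ``for the dual matroid defined over that field''; and the only case the paper actually uses is $\mathbb{F}_2$, in \autoref{bipartite}, where your argument specialises to the familiar fact that the rows of a binary matrix span the cocircuit space of its column matroid. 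So your route is a legitimate self-contained proof where the paper gives only a pointer; nothing is missing beyond the bookkeeping you already identify, namely confirming that the dual matroid of \cite{carmesin2017embedding4} is the dual of the column matroid of $\partial$ rather than of its transpose.
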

\begin{proof}
    See \cite{carmesin2017embedding4}.
\end{proof}

\begin{lem}
    A graph is bipartite if and only if all cycles have even length.
\end{lem}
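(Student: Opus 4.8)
The plan is to prove the two implications separately, directly from the definition of a bipartite graph.

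For the forward direction, suppose $G$ is bipartite with bipartition $(A,B)$. Along any cycle $v_0 v_1 \cdots v_{k-1} v_0$, consecutive vertices lie in different classes, so $v_i$ belongs to $A$ for $i$ of one parity and to $B$ for $i$ of the other. Since $v_{k-1}$ is adjacent to $v_0$, the indices $0$ and $k-1$ have opposite parities, which forces $k$ to be even. Hence every cycle has even length.

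For the converse, assume every cycle of $G$ has even length and show $G$ is bipartite. It suffices to treat each connected component separately, so assume $G$ is connected and fix a base vertex $r$. Set $A=\{v : d(r,v)\text{ is even}\}$ and $B=\{v : d(r,v)\text{ is odd}\}$, where $d$ is graph distance; these partition $V(G)$. I claim no edge joins two vertices of the same class. Suppose not, so $uv\in E(G)$ with $d(r,u)\equiv d(r,v)\pmod 2$. Choose shortest paths $P$ from $r$ to $u$ and $Q$ from $r$ to $v$, and let $w$ be the last vertex (traversing from $r$) lying on both. Then the segment of $P$ from $w$ to $u$, the edge $uv$, and the segment of $Q$ from $v$ back to $w$ form a closed walk of length $(d(r,u)-d(r,w)) + 1 + (d(r,v)-d(r,w))$, which is odd because $d(r,u)+d(r,v)$ is even. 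By the choice of $w$ the two segments meet only at $w$, so this closed walk is in fact a cycle, and it has odd length, contradicting the hypothesis. Therefore $(A,B)$ is a bipartition and $G$ is bipartite.

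The only step needing a little care is verifying that the walk produced in the converse is a genuine cycle rather than merely a closed walk, which is what the ``last common vertex'' choice secures; alternatively one may quote the standard fact that every closed walk of odd length contains an odd cycle and apply it to the concatenation of $P$, the edge $uv$, and the reverse of $Q$. Either way this is routine, and I do not anticipate a real obstacle.
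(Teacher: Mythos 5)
Your proof is correct. The paper does not actually prove this lemma --- it simply cites Diestel's textbook --- and what you have written is essentially the standard textbook argument (parity along a cycle for one direction; distance classes from a root vertex, plus extraction of an odd cycle from an edge inside a class, for the other). Two small points you leave implicit are fine but worth being aware of: the computation of the walk's length uses that an initial segment of a shortest path is itself a shortest path, so the two $r$--$w$ segments of $P$ and $Q$ have equal length $d(r,w)$; and the ``last common vertex'' choice does guarantee the two tails meet only at $w$, since every vertex of $P$ after $w$ lies on no shortest path $Q$ at all. Your fallback via ``every closed walk of odd length contains an odd cycle'' is an equally acceptable way to finish.
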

\begin{proof}
    See \cite{diestel2016graph}.
\end{proof}

\begin{lem} \label{bipartite}
    Given a 2-complex embedded in 3-space with all edges of even degree, then the dual graph is bipartite.
\end{lem}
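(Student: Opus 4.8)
The plan is to deduce bipartiteness of the dual graph from the criterion stated above (a graph is bipartite if and only if all its cycles have even length) by controlling the entire cycle space of the dual graph using the incidence vectors of edges as a generating set.

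First I would fix notation for the dual graph $G$: its vertices are the chambers of the 2-complex and its edges are the faces, each face being incident with exactly two chambers and so joining them in $G$. The key observation I would establish is that for every edge $e$ of the 2-complex, the faces incident with $e$, listed in their cyclic order around $e$, trace out a cycle $C_e$ in $G$ of length $\deg(e)$: consecutive faces in this cyclic order share a chamber, hence are adjacent edges of $G$, and going once around $e$ returns to the start. Under the natural identification of $\mathbb{F}_2$-vectors indexed by faces with subsets of the edge set of $G$, the incidence vector of $e$ is exactly the edge set of $C_e$. Since all edges of the 2-complex have even degree by hypothesis, every $C_e$ consists of an even number of edges.

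Next I would invoke the lemma that the incidence vectors of the edges of a 2-complex generate all cycles of the dual matroid over $\mathbb{F}_2$; here the cycles of the dual matroid are precisely the cycles of $G$, so these vectors span the cycle space of $G$ over $\mathbb{F}_2$. Because $\mathbb{F}_2$-addition of edge sets satisfies $|A \triangle B| \equiv |A| + |B| \pmod 2$, the parity of the number of edges is invariant under taking sums, and each generator $C_e$ has even size; hence every element of the cycle space of $G$ has an even number of edges. In particular every cycle of $G$ in the graph-theoretic sense is such an element, so it has even length, and the bipartiteness criterion then yields that $G$ is bipartite.

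The step I expect to be the main obstacle is the geometric one in the second paragraph: verifying that the faces around an edge $e$ of a 2-complex embedded in 3-space genuinely close up into a single cycle of the dual graph, each step crossing one face into a neighbouring chamber. This is where the embedding in 3-space and the local structure at an edge are actually used (that the link of $e$ is a cycle, and that each face separates exactly two chambers), and it must be stated carefully enough that the identification ``incidence vector of $e$ $=$ edge set of $C_e$'' is legitimate. Everything after that is the soft $\mathbb{F}_2$-parity argument above.
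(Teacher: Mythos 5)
Your proposal is correct and follows essentially the same route as the paper: use the cited lemma that the edge incidence vectors generate the cycle space of the dual matroid over $\mathbb{F}_2$, note that each generator has even weight because each edge has even degree, observe that parity of weight is preserved under $\mathbb{F}_2$-addition, and conclude via the criterion that a graph with only even cycles is bipartite. The extra geometric step you flag as the main obstacle (that the faces around an edge close up into a single cycle $C_e$ of the dual graph) is in fact not needed for this argument --- only the even weight of the generators and the spanning property matter --- and the paper omits it accordingly.
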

\begin{proof}
    Let $\mathcal{K}$ be a 2-complex embedded in 3-space with all edges having even degree. Consider the dual matroid $M$ of $\mathcal{K}$. For each edge $e$ of $\mathcal{K}$, take the column vector $\boldsymbol{e} \in \mathbb{F}_2^F$ where each face $f \in F$ in $\mathcal{K}$ corresponds to a row of $\boldsymbol{e}$, and there is a 1 in that row if $e$ is incident to $f$, and 0 otherwise. These binary column vectors make up a generating set for the cycle space of the matroid $M$. Each of the vectors $\boldsymbol{e}$ will have even length since each edge in $\mathcal{K}$ has even degree.
    
    Any cycle of $M$ can be written as a sum of the elements in the cycle space. So because we have a generating set for the cycles with all elements even length, this means that any cycle of $M$ will have even length. To see this, consider the sum two binary vectors $\boldsymbol{v}$ and $\boldsymbol{w}$. The length of $\boldsymbol{v}+\boldsymbol{w}$ is just the length of $\boldsymbol{v}$ plus the length of $\boldsymbol{w}$ minus 2 times the number of coordinates where $\boldsymbol{v}$ and $\boldsymbol{w}$ both equal 1.
    
    Now we have that all cycles in $M$ have even length, which means it is bipartite.
\end{proof}

\begin{dfn}
    The \textit{effective length} of a cycle $o$ in a labelled digraph $D$ is the sum over the weights of all edges of $o$ oriented clockwise, minus the sum over the weights of the anti-clockwise edges of $o$.
\end{dfn}

\begin{dfn}
    Given an abelian group $\Gamma$, a $\Gamma$-\emph{co-flow} is an assignment of elements of $\Gamma$ (referred to as weights) to the edges of a directed graph $D$ such that every cycle $o$ of $D$ has effective length 0.
\end{dfn}

\begin{lem} \label{euldig_general}
    Given an abelian group $\Gamma$ and a graph $G$, there is a colouring of the vertices of $G$ with the elements of $\Gamma$ such that adjacent vertices receive different colours if and only if there is a nowhere zero $\Gamma$-co-flow on the edges.
\end{lem}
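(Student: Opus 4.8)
The plan is to prove both directions by relating colourings to co-flows through the natural construction of orienting edges. First I would fix an arbitrary orientation of each edge of $G$, turning $G$ into a digraph $D$; note that the notion of $\Gamma$-co-flow depends on this choice of orientation, but whether a nowhere zero $\Gamma$-co-flow exists does not (reversing an edge just negates its weight, and the group is abelian). For the forward direction, suppose $c\colon V(G)\to\Gamma$ is a proper colouring. Define a weight on each directed edge $e=(u,v)$ by $w(e):=c(v)-c(u)$. Since $c$ is proper, $w(e)\neq 0$ for every edge, so $w$ is nowhere zero. To check it is a co-flow, take any cycle $o$ of $D$; walking around $o$ and summing the signed weights telescopes to $0$, because each vertex visited contributes its colour once with a plus and once with a minus. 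Hence $w$ is a nowhere zero $\Gamma$-co-flow.

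For the converse, suppose $w$ is a nowhere zero $\Gamma$-co-flow on $D$. I would construct a colouring $c$ component by component: within a connected component, pick a base vertex $v_0$, set $c(v_0)$ arbitrarily (say $0$), and for any other vertex $v$ choose a directed walk from $v_0$ to $v$ and set $c(v)$ to be $c(v_0)$ plus the signed sum of the weights along that walk (adding $w(e)$ when traversed forwards, subtracting when traversed backwards). The co-flow condition is exactly what guarantees this is well defined: any two walks from $v_0$ to $v$ differ by a closed walk, which decomposes into cycles, and each cycle contributes effective length $0$, so the assigned value of $c(v)$ does not depend on the chosen walk. Finally, for an edge $e=(u,v)$ we have $c(v)-c(u)=\pm w(e)\neq 0$ (take the walk to $u$ and extend it across $e$), so adjacent vertices get distinct colours and $c$ is a proper $\Gamma$-colouring.

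The main thing to get right is the well-definedness in the converse direction — that is the only place the co-flow hypothesis does real work, and it requires the standard fact that the difference of two $v_0$–$v$ walks is a sum (in the cycle space, over $\mathbb{Z}$ with signs) of cycles, together with additivity of effective length under this decomposition. Everything else is a routine telescoping argument. One should also remark that the statement as phrased only needs $w$ to vanish on cycles of $D$ in the graph-theoretic sense (not, say, on elements of a matroid cycle space), so no deeper structure is invoked here; the interaction with the dual matroid and \autoref{bipartite} happens later when $\Gamma$ is specialised. If one prefers, the whole lemma can be stated and proved in the language of the coboundary map $\delta\colon \Gamma^{V}\to\Gamma^{E}$, whose image is precisely the set of co-flows and whose kernel is the locally constant functions; the proof above is just that statement unwound.
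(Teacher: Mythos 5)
Your proof is correct, but it takes a different route from the paper: the paper does not argue this lemma at all, it simply defers to Section 6.3 of Diestel's book, whereas you give the standard self-contained potential-function argument (weights as colour differences in one direction; integrating the weights along walks from a base vertex in the other). Your treatment also handles a point the paper glosses over, namely that the lemma is stated for a graph while a co-flow lives on a digraph, and you correctly observe that the existence of a nowhere zero co-flow is independent of the chosen orientation. The only place where you lean on an unproved ingredient is the well-definedness step: the claim that the signed weight sum of any closed walk vanishes when every cycle has effective length $0$. Your appeal to the $\mathbb{Z}$-cycle space (the signed incidence vector of a closed walk is an integer combination of signed cycle vectors, and $\Gamma$ is a $\mathbb{Z}$-module) is fine; an even more elementary alternative is induction on the length of the closed walk, splitting at a repeated vertex and noting that a back-and-forth traversal of an edge contributes $0$. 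What your approach buys is a proof that is verifiable within the paper and visibly works for arbitrary abelian $\Gamma$ and arbitrary (not necessarily planar) graphs, which is exactly the generality in which \autoref{euldig} and \autoref{4col} later invoke the lemma; what the citation buys is brevity, at the cost of asking the reader to extract the tension--colouring correspondence from the flow-theoretic material in the reference.
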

\begin{proof}
    This follows from Section 6.3 in \cite{diestel2016graph}.
\end{proof}

\begin{cor} \label{euldig}
    A labelled digraph with each cycle effective length $0 \text{ mod } k$, where the label on each edge is $d \in \mathbb{Z}_k\backslash {[0]_k}$, is $k$-vertex-colourable.
\end{cor}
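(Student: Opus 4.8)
The plan is to obtain this as the special case $\Gamma = \mathbb{Z}_k$ of \autoref{euldig_general}. Since $\mathbb{Z}_k$ has exactly $k$ elements, a colouring of the vertices of the digraph by elements of $\mathbb{Z}_k$ in which adjacent vertices receive distinct elements is the same thing as a proper $k$-vertex-colouring. So by \autoref{euldig_general} it suffices to produce a nowhere zero $\mathbb{Z}_k$-co-flow on the edges.

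I would then argue that the labelling we are already given \emph{is} such a co-flow. It is nowhere zero by hypothesis, as every edge carries a label in $\mathbb{Z}_k \setminus \{[0]_k\}$. And it is a co-flow because, unravelling the definitions, the effective length of a cycle $o$ is the signed sum of the edge labels around $o$ (with a plus sign for edges traversed in the direction of $o$ and a minus sign otherwise), and the hypothesis states precisely that this sum is $0 \bmod k$, i.e.\ is the zero element of $\mathbb{Z}_k$. Feeding this co-flow into \autoref{euldig_general} then yields the claimed colouring.

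The only thing that needs a word of care is a translation issue rather than a genuine obstacle: \autoref{euldig_general} is stated for an (undirected) graph $G$, whereas here we start from a digraph. One should simply observe that the orientation of the digraph is exactly the extra data required to speak of ``effective length'' and of a $\Gamma$-co-flow in the first place, and that the output of \autoref{euldig_general} --- a proper colouring of the vertex set --- is insensitive to which orientation was used. With that remark in place the corollary is immediate.
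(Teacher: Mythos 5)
Your proposal is correct and follows exactly the paper's route: the paper proves \autoref{euldig} simply by observing it is the special case $\Gamma=\mathbb{Z}_k$ of \autoref{euldig_general}, and your write-up just spells out the details (that the given labelling is a nowhere zero $\mathbb{Z}_k$-co-flow) that the paper leaves implicit. Nothing further is needed.
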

\begin{proof}
    This is a special case of \autoref{euldig_general}.
\end{proof}

\begin{lem}\label{colouringL}
    Let $\mathcal{K}$ be a 2-complex and $L(\mathcal{K})$ the spatial line graph of $\mathcal{K}$. If there exists a $k$-vertex-colouring of $L(\mathcal{K})$ then a $k$-edge-colouring of $\mathcal{K}$ exists. 
\end{lem}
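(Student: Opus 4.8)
The plan is to observe that a proper vertex-colouring of $L(\mathcal{K})$ is, after identifying the vertices of $L(\mathcal{K})$ with the edges of $\mathcal{K}$, literally the data of a proper edge-colouring of $\mathcal{K}$, so that essentially nothing is needed beyond unwinding the two definitions. Concretely, suppose $c \colon V(L(\mathcal{K})) \to \{1,\dots,k\}$ is a proper $k$-vertex-colouring. Since $V(L(\mathcal{K}))$ is by definition the edge set of $\mathcal{K}$, the map $c$ assigns a colour in $\{1,\dots,k\}$ to every edge of $\mathcal{K}$; take this assignment as the candidate edge-colouring of $\mathcal{K}$.

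It then remains only to check properness. Let $e$ and $e'$ be two distinct edges of $\mathcal{K}$ lying on a common face $f$. By the definition of the spatial line graph, $e$ and $e'$ are adjacent in $L(\mathcal{K})$, hence $c(e) \neq c(e')$ because $c$ is a proper colouring. Thus no two edges sharing a face of $\mathcal{K}$ receive the same colour, which is exactly the definition of a proper $k$-edge-colouring of $\mathcal{K}$. (Whether $L(\mathcal{K})$ is regarded as a simple graph or as a multigraph — in the case that two edges of $\mathcal{K}$ share more than one face — is irrelevant here, since parallel edges do not affect proper vertex-colourings.)

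I do not expect a genuine obstacle: the only points to be careful about are that the adjacency relation defining $L(\mathcal{K})$ coincides verbatim with the ``shares a face'' relation used in the definition of a proper edge-colouring, and that the vertex set of $L(\mathcal{K})$ really is the edge set of $\mathcal{K}$ rather than a set of formal symbols merely in bijection with it, so that the colouring transports with no bookkeeping. Both hold by the definitions stated above, so the argument is complete. It is worth remarking that the same reasoning gives the converse implication as well, so that $k$-edge-colourability of $\mathcal{K}$ and $k$-vertex-colourability of $L(\mathcal{K})$ are in fact equivalent; only the stated direction is used in the sequel.
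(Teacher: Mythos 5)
Your proposal is correct and follows essentially the same route as the paper's proof: identify the vertices of $L(\mathcal{K})$ with the edges of $\mathcal{K}$, transport the colouring, and check properness via the definition of adjacency in the spatial line graph. Nothing is missing.
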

\begin{proof}
    Let $\mathcal{K}$ be a 2-complex with edge set $E$ and set of faces $F$. Let $L(\mathcal{K})$ be the spatial line graph of $\mathcal{K}$, with vertex set $V_{L}$ and edge set $E_{L}$. 
    
    Suppose there exists a $k$-vertex-colouring of $L(\mathcal{K})$, a labelling $c : V_{L} \rightarrow C$ such that $|C|=k$ and for $x,y \in V_{L}$ we have that $c(x) \neq c(y)$ whenever $ \{ x, y \} \in E_{L}$. So no two adjacent vertices in $L(\mathcal{K})$ share the same label. By the definition of a spatial line graph, two vertices in $L(\mathcal{K})$ are adjacent if their corresponding edges in $\mathcal{K}$ are incident to a common face.
    
    We define an edge-colouring of $\mathcal{K}$ as follows: 
    $c':E \rightarrow C'$ where $c'(x)=c(x)$ for $x \in E$. First notice that if $x \in E$ then $x \in V_{L}$ since the vertex set of $L(\mathcal{K})$ is precisely the edge set of $\mathcal{K}$ by definition.
    
    Consider $y, z \in E = V_{L}$ with $ \{ y, z \} \subseteq F'$ for $F' \in F$. So $y$ and $z$ are edges of $\mathcal{K}$ incident to a common face, by the definition their corresponding vertices in $L(\mathcal{K})$ are adjacent. In other words $ \{ y, z \} \in E_{L}$. Hence $c'(y) = c(y) \neq c(z) = c'(z)$ whenever $ \{ y, z \} \subseteq F'$ for some $F' \in F$. So $c'$ is a consistent edge-colouring of $\mathcal{K}$.
    
    Moreover, let $c'(x) \in C'$, then $c'(x) = c(x) \in C$ so we have that $C' \subseteq C$ and $|C'|\leq |C| = k$. Hence a $k$-edge-colouring of $\mathcal{K}$ exists.
\end{proof}

\begin{thm}\label{3colour}
    A triangulation of 3-space can be edge coloured with three colours if and only if all edges have even degree.
\end{thm}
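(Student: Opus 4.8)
The plan is to prove both implications; the forward one is elementary and the reverse one runs through the chain of lemmas just established.

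For the forward direction, fix a $3$-edge-colouring of the triangulation $\mathcal{K}$ and an edge $e=ab$ of degree $d$. Since every face is a triangle whose three edges pairwise share that face, in a proper $3$-edge-colouring the three edges of each face receive all three colours. The faces containing $e$ are $abv_1,\dots ,abv_d$, arranged cyclically around $e$ so that $abv_iv_{i+1}$ is a chamber for each $i$ (indices mod $d$); in particular $av_i$ and $av_{i+1}$ both lie on the face $av_iv_{i+1}$. Hence inside the link graph $L(a)$ the neighbours of $ab$ are exactly $av_1,\dots ,av_d$, and $av_1\,av_2\cdots av_d$ is a cycle of length $d$. Now $ab$ has some colour, say $0$; each $av_i$ shares the face $abv_i$ with $ab$, so has colour in $\{1,2\}$, and $av_i,av_{i+1}$ share $av_iv_{i+1}$, so get different colours. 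Thus $av_1,\dots ,av_d$ is a proper $2$-colouring of a $d$-cycle, which forces $d$ even.

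For the reverse direction, assume every edge of $\mathcal{K}$ has even degree. By \autoref{colouringL} it suffices to $3$-vertex-colour $L(\mathcal{K})$, and by \autoref{euldig} with $k=3$ it suffices to orient the faces of $\mathcal{K}$, forming the directed spatial line graph $DL(\mathcal{K})$, and to weight every edge of $DL(\mathcal{K})$ with $1\in\mathbb{Z}_3$ so that every cycle has effective length $\equiv 0\pmod 3$. This is where the hypothesis enters: by \autoref{bipartite} the dual graph of $\mathcal{K}$ (chambers as vertices, faces as edges) is bipartite, so the chambers can be $2$-coloured black and white. Fix an orientation of the ambient $3$-sphere, giving each chamber an induced orientation, and orient each face $f$ by the boundary orientation induced from the black chamber on $f$. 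Then all four faces of a black chamber $T$ are oriented as $\partial T$ and all four faces of a white chamber $T'$ as $-\partial T'$; in either case the faces of a chamber are coherently oriented. A short check along the lines of \autoref{fig:directedslg} then shows that for a coherently oriented chamber each of its four tetrahedron face cycles and each of its four tetrahedron vertex cycles is a directed $3$-cycle of $DL(\mathcal{K})$, so with all weights equal to $1$ every tetrahedron cycle has effective length $\pm 3\equiv 0\pmod 3$.

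It remains to upgrade this from tetrahedron cycles to \emph{all} cycles of $L(\mathcal{K})$. Effective length is a linear functional on the cycle space, so this follows once one knows that the tetrahedron cycles generate the cycle space of $L(\mathcal{K})$ over the integers (hence over $\mathbb{F}_3$) — and this generation statement is the step I expect to be the main obstacle, and presumably the point where earlier attempts went wrong, since it is a genuinely global claim rather than a local one. I would attack it topologically: form the $2$-complex $X$ whose $1$-skeleton is $L(\mathcal{K})$ and whose $2$-cells are attached along the tetrahedron cycles, and show $H_1(X)=0$, either by exhibiting a $\pi_1$-isomorphism with the (simply connected) $3$-sphere, or by building $X$ up one vertex-link at a time — each link graph $L(v)$ is a triangulated $2$-sphere whose cycle space is generated by its facial triangles, which are exactly the tetrahedron vertex cycles at $v$, distinct link graphs meet in at most one vertex of $L(\mathcal{K})$, and the longer cycles threading several link graphs are reduced using the tetrahedron face cycles together with simple connectivity of $\mathbb{S}^3$. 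Granting generation, \autoref{euldig} produces a $3$-vertex-colouring of $L(\mathcal{K})$ and \autoref{colouringL} turns it into a $3$-edge-colouring of $\mathcal{K}$, which completes the proof.
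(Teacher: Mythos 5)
Your forward direction is correct and in fact more self-contained than the paper's: the paper deduces it by quoting Heawood's theorem for the link graph at an endvertex of the odd edge, while you prove the needed local fact directly by walking around the edge $e=ab$ and extracting a properly $2$-coloured $d$-cycle in $L(a)$. Your reverse direction follows the same chain as the paper: bipartiteness of the dual graph (\autoref{bipartite}) gives an orientation of the faces that is coherent on every chamber, the tetrahedron cycles of $DL(\mathcal{K})$ then have effective length $0 \bmod 3$, and \autoref{euldig} together with \autoref{colouringL} converts this into a $3$-edge-colouring --- provided all cycles of $L(\mathcal{K})$, not just the tetrahedron cycles, have effective length $0 \bmod 3$.

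That last step is where your proposal has a genuine gap: you correctly identify that everything hinges on the tetrahedron cycles generating the cycle space of $L(\mathcal{K})$, but you do not prove it --- you only name two possible strategies (a $\pi_1$ comparison with $S^3$, or an induction over link graphs) and proceed ``granting generation''. This is precisely the non-trivial global content of the theorem, and it is the point at which the 1978 argument discussed in the concluding remarks breaks down: cycles of $L(\mathcal{K})$ that thread through many link graphs need a concrete mechanism for being decomposed into tetrahedron cycles, and ``reduced using simple connectivity of the sphere'' is not yet such a mechanism. The paper closes this gap in its second sublemma by forming the $2$-complex $\mathcal{D}$ obtained from $DL(\mathcal{K})$ by attaching a face along every tetrahedron cycle and proving that $\mathcal{D}$ is simply connected, using the criterion of \cite{carmesin2017embedding2} that a locally connected complex embedded in $S^3$ is simply connected if and only if all its local surfaces are bounded by spheres; the local surfaces of $\mathcal{D}$ are the octahedra sitting where the original tetrahedra were and the voids at their vertices, all of which are spheres, and simple connectivity then yields that the face cycles (the tetrahedron cycles) generate all cycles. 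Until you supply an argument of this kind --- i.e.\ an actual proof that $H_1$ of your complex $X$ vanishes --- your proof is incomplete at its crucial point.
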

\begin{proof}
        First we show that a triangulation of 3-space that can be edge-coloured with three colours has the property that all edges have even degree.
        
        Let $\mathcal{K}$ be a 2-complex embedded in 3-space with every chamber a tetrahedron and can be coloured using 3 colours. Suppose for contradiction that $\mathcal{K}$ has an edge $e$ of odd degree. 
        
        Consider the link graph at $v$, one of the end vertices of $e$, the vertex that corresponds to $e$ in this graph has odd degree. Since $\mathcal{K}$ is a triangulation of 3-space we must have that all of the link graphs are triangulations of the plane. Then by Heawood's theorem the link graph at $v$ cannot be 3-colourable as it has a vertex of odd degree.
        
        Now we prove the opposite direction, a triangulation of 3-space where all edges have even degree can be edge-coloured with three colours. Let $\mathcal{K}$ be a 2-complex embedded in 3-space with every chamber a tetrahedron and every edge even degree. 
        
        \begin{sublem}
            There exists an orientation of all the faces of $\mathcal{K}$ such that the four faces of each tetrahedron form a consistent orientation of that tetrahedron.
        \end{sublem}
        \begin{proof}
            There are two consistent orientations for the faces in tetrahedrons, the left handed and right handed orientations. 
            
            Because the dual graph is bipartite by \autoref{bipartite}, the tetrahedron in $\mathcal{K}$ can be split into two sets such that if two tetrahedron are in the same set then they do not share a face. We can give all of the tetrahedrons in one of the sets the left handed orientation and give the other set the right handed orientation.
            
            When two tetrahedron meet at a face the two orientations will be mirrors of each other and so the orientation on that face will be the same. So we have an orientation for all of the faces in $\mathcal{K}$ that are on tetrahedrons which form consistent orientations on the tetrahedrons.
        \end{proof}
    
\begin{figure}[h]
    \centering
    \includegraphics[width=0.4\textwidth]{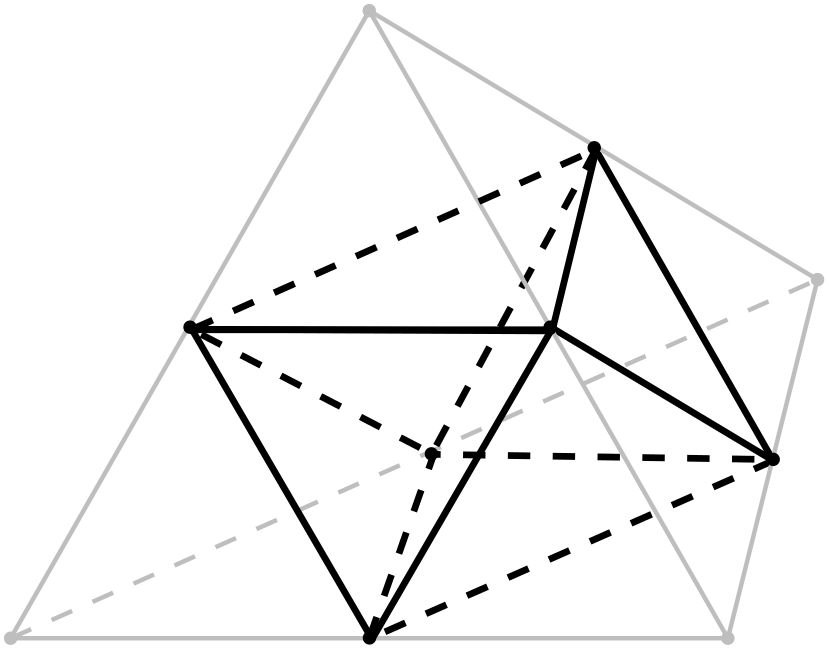}
    \caption{The octahedron (black) in a spacial line graph that is where the tetrahedrons of the original 2-complex (grey) were.}
    \label{fig:slgoctahedron}
\end{figure}
        
        \begin{sublem}
            There exists an orientation of $DL(\mathcal{K})$ such that every cycle has effective length 0 mod 3.
        \end{sublem}
        \begin{proof}
                Give the edges of $DL(\mathcal{K})$ the same orientation that the corresponding face has in $\mathcal{K}$. That is, for a face in $\mathcal{K}$ with edges $x, y, z$ and orientation $xyz$ the corresponding edges in $DL(\mathcal{K})$ will have directions $xy$, $yz$, and $zx$.
                
                It is trivial to see that the tetrahedron cycles of $DL(\mathcal{K})$ all have effective length 0 mod 3. To show that the other cycles in $DL(\mathcal{K})$ have effective length 0 mod 3 we need to build a 2-complex from $DL(\mathcal{K})$ such that all its faces are bounded by cycles of effective length 0 mod 3 and show it is simply connected. For a simply connected 2-complex, the face cycles generate all of the other cycles so our tetrahedron cycles having effective length 0 mod 3 implies that all cycles will have effective length 0 mod 3.
                
                We build the simplicial complex $\mathcal{D}$ from $DL(\mathcal{K})$ by adding a face at every tetrahedron cycle. 
                
                In \cite{carmesin2017embedding2} it was proved that if C is a locally connected simplicial complex embedded in $S^3$, then $C$ being simply connected is equivalent to all the local surfaces being bounded by spheres. So it is enough to show that all local surfaces of $\mathcal{D}$ are bounded by spheres.
                
                In $\mathcal{D}$ we have octahedrons where the tetrahedrons in $DL(\mathcal{K})$ were, see \autoref{fig:slgoctahedron}, which are bounded by tetrahedrons of $DL(\mathcal{K})$ and so these are bounded by spheres. The only other local surfaces are the voids where the vertices of the tetrahedrons in $DL(\mathcal{K})$ were. These are bounded by the new faces we added and so are also bounded by spheres and we are done.
        \end{proof}
        
        Now we have that $DL(\mathcal{K})$ is a digraph with every cycle having effective length 0 mod 3. So by \autoref{euldig} $DL(\mathcal{K})$ is 3-colourable.
        
        Then by \autoref{colouringL} there exists a 3-colouring of $\mathcal{K}$.
\end{proof}

\begin{prop}\label{4col}
    The vertices of a triangulation of 3-space are 4-colourable iff the edges are 3-colourable.
\end{prop}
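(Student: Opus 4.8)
The plan is to translate between the two kinds of colouring using the Klein four-group $V=\mathbb{Z}_2\times\mathbb{Z}_2$: identify the four vertex-colours with the elements of $V$ and the three edge-colours with the three nonzero elements $a,b,c$ of $V$. The algebraic facts I will use repeatedly are that any two of $a,b,c$ add to the third (so $a+b+c=0$), and conversely that three nonzero elements of $V$ summing to $0$ must be pairwise distinct.

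First I would show that a proper $4$-vertex-colouring $\kappa$ of $\mathcal{K}$ induces a proper $3$-edge-colouring. Put $\gamma(uv):=\kappa(u)+\kappa(v)$ for every edge $uv$; this is nonzero because $\kappa(u)\neq\kappa(v)$, so $\gamma$ takes values in $\{a,b,c\}$. For the three edges of any face $uvw$ we have $\gamma(uv)+\gamma(vw)+\gamma(wu)=0$ with all three terms nonzero, hence pairwise distinct; so no two edges sharing a face receive the same colour and $\gamma$ is a proper $3$-edge-colouring. (Alternatively one can route this direction through \autoref{3colour}: colouring the vertex of a link graph $L(v)$ coming from an edge $vu$ by $\kappa(u)$ gives a proper $3$-colouring of $L(v)$, so by Heawood's theorem every vertex of $L(v)$, and hence every edge of $\mathcal{K}$, has even degree, and \autoref{3colour} then provides the $3$-edge-colouring.)

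For the converse, let $\gamma$ be a proper $3$-edge-colouring with values in $\{a,b,c\}\subseteq V$. I want a map $\kappa$ on the vertices with $\kappa(u)+\kappa(v)=\gamma(uv)$ for every edge $uv$; since the right side is always nonzero, any such $\kappa$ is automatically a proper $4$-vertex-colouring. To build it, fix a base vertex $v_0$, set $\kappa(v_0)=0$, and for each vertex $w$ put $\kappa(w):=\sum_{e\in P}\gamma(e)$ where $P$ is a walk from $v_0$ to $w$ and the sum is taken in $V$ (so the direction of traversal does not matter). This is well defined, and then has the required property, provided $\gamma$ sums to $0$ along every closed walk in the $1$-skeleton of $\mathcal{K}$.

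To check that, extend $\gamma$ to an $\mathbb{F}_2$-linear map $\phi\colon C_1(\mathcal{K};\mathbb{F}_2)\to V$, which is legitimate since $V$ is an $\mathbb{F}_2$-vector space. By the first direction, $\phi(\partial f)=\gamma(e_1)+\gamma(e_2)+\gamma(e_3)=0$ for every face $f$ with edges $e_1,e_2,e_3$, so $\phi$ vanishes on the span of the face boundaries. Since $\mathcal{K}$ triangulates $3$-space, which is simply connected, $H_1(\mathcal{K};\mathbb{F}_2)=0$, so the face boundaries span the whole cycle space $Z_1(\mathcal{K};\mathbb{F}_2)$; as the mod-$2$ edge set of any closed walk lies in $Z_1(\mathcal{K};\mathbb{F}_2)$, every such walk has $\gamma$-sum $0$. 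This step --- globalising the local face condition to a vertex-colouring --- is the only place the topological hypothesis is used, and hence the crux of the argument; everything else is formal computation with $V$. (Combining the proposition with \autoref{3colour} also records that a triangulation of $3$-space is $4$-vertex-colourable precisely when all its edges have even degree, the direct $3$-dimensional counterpart of Heawood's criterion.)
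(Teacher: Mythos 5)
Your proof is correct and follows essentially the same route as the paper: both directions use the identification of the four vertex-colours with $\mathbb{F}_2\times\mathbb{F}_2$ and the three edge-colours with its nonzero elements, with the edge-colour of $uv$ being $\kappa(u)+\kappa(v)$, and both use simple-connectedness to get that face (triangle) cycles generate all cycles. The only difference is that where the paper invokes \autoref{euldig_general} (the nowhere-zero co-flow/colouring correspondence) to pass from ``every cycle sums to zero'' to a vertex colouring, you inline that step by building the potential function explicitly from a base vertex, which is the standard proof of that lemma.
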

\begin{proof}
    Take a triangulation of 3-space with a colouring on the vertices using 4 colours. Label the 4 colours with the vectors $\left(\begin{smallmatrix}0\\0\end{smallmatrix}\right), \left(\begin{smallmatrix}0\\1\end{smallmatrix}\right), \left(\begin{smallmatrix}1\\0\end{smallmatrix}\right), \left(\begin{smallmatrix}1\\1\end{smallmatrix}\right)$ from the abelian group $\mathbb{F}_2 \times \mathbb{F}_2$. Then for each edge, the new colour would be the sum of the two colours on the end vertices, over $\mathbb{F}_2 \times \mathbb{F}_2$. This will  give us 3 colours as the vector $\left(\begin{smallmatrix}0\\0\end{smallmatrix}\right)$ cannot appear, if it did it would mean that the two end vertices have the same colour which is not possible under a proper colouring. Also, it is easy to see that if the three vertices on a triangle have different colours, then the edges will also get different colours, so this 3-colouring on the edges is proper.

    Take a triangulation of 3-space with a 3-colouring on the edges. Replace the 3 colours on the edges with the vectors $\left(\begin{smallmatrix}0\\1\end{smallmatrix}\right), \left(\begin{smallmatrix}1\\0\end{smallmatrix}\right), \left(\begin{smallmatrix}1\\1\end{smallmatrix}\right)$. For any triangle we will have exactly one copy of all three vectors present so the sum of the vectors on the triangle cycle is zero. Since the triangulation is simply connected, we have that all of the cycles are generated by the face cycles, i.e. the triangles. So the vectors on every cycle sums to zero. Now we have shown that this is a nowhere zero co-flow, and we can use \autoref{euldig_general} to show that we have a 4-colouring on the vertices.
\end{proof}

We can now restate our theorem as follows.

\begin{cor}
    For a triangulation of 3-space, the vertices are 4-colourable iff all edges have even degree.
\end{cor}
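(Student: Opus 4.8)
The plan is to obtain this statement by simply composing the two results just proved. By \autoref{4col}, for any triangulation of 3-space the vertices are 4-colourable if and only if the edges are 3-colourable; by \autoref{3colour}, the edges of such a triangulation are 3-colourable if and only if every edge has even degree. Chaining these two biconditionals yields that the vertices are 4-colourable if and only if every edge has even degree, which is exactly the assertion. So the proof reduces to a single line of the form ``Combine \autoref{4col} with \autoref{3colour}.''

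No step here poses any real difficulty, and there is no genuine obstacle: all the content already lives in the two cited results — the topological argument in \autoref{3colour} (building the auxiliary complex $\mathcal{D}$, checking its local surfaces are bounded by spheres, then invoking \autoref{euldig} and \autoref{colouringL}), and the group-theoretic translation in \autoref{4col} (passing colourings through $\mathbb{F}_2 \times \mathbb{F}_2$ and applying \autoref{euldig_general}). The only purpose of recording the corollary is presentational: Heawood's original theorem characterises \emph{vertex} 3-colourability of plane triangulations, and this corollary repackages our edge-colouring theorem as a vertex-colouring statement — now with four colours rather than three, reflecting the increase in dimension — so that the analogy with Heawood's result is transparent. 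I would therefore keep the write-up minimal and not reproduce any of the earlier arguments.
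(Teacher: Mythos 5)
Your proposal is correct and matches the paper's proof exactly: the corollary is obtained by chaining the biconditionals of \autoref{4col} and \autoref{3colour}, with no further argument needed.
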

\begin{proof}
    Combine \autoref{4col} and \autoref{3colour}.
\end{proof}

\section{Concluding remarks}
Note that our proof actually proves the stronger statement with the 3-sphere replaced by a general homology sphere.
However the generalisation of our theorem to any 3-manifold is false. To see this, consider the following example.

\begin{eg}
    Take a triangulation of 3-space with all edges even degree. All such triangulations have a unique 3-colouring up to permutation of colours. Take two tetrahedron that do not intersect with any common faces, edges, or vertices. Remove their interiors from the manifold and identify their boundaries in such a way that the colouring isn't compatible. Now we have a triangulation of a 3-manifold where all edges have even degree but isn't 3-colourable.
\end{eg}

In the 70s a paper was published \cite{goodman1978even} that claims to prove \autoref{3colour}. In their proof it is claimed that in a triangulation of a simply connected space every loop of 3-simplices is a sum of simple loops\footnote{A \textit{simple loop} is 3-simplices around an interior edge.}.
However, consider the loop of 3-simplices, as in \autoref{fig:falseproof}, obtained by gluing together 3-simplices in a linear way along faces and then identifying a vertex of the first 3-simplex with a vertex of the last.
This is a loop of 3-simplices that could occur in a such a triangulation, but it is not clear how this is always a sum of simple loops or how one could extend a colouring of a simple loop to a colouring of this loop.

\begin{figure}[h]
    \centering
    \includegraphics[width=0.4\textwidth]{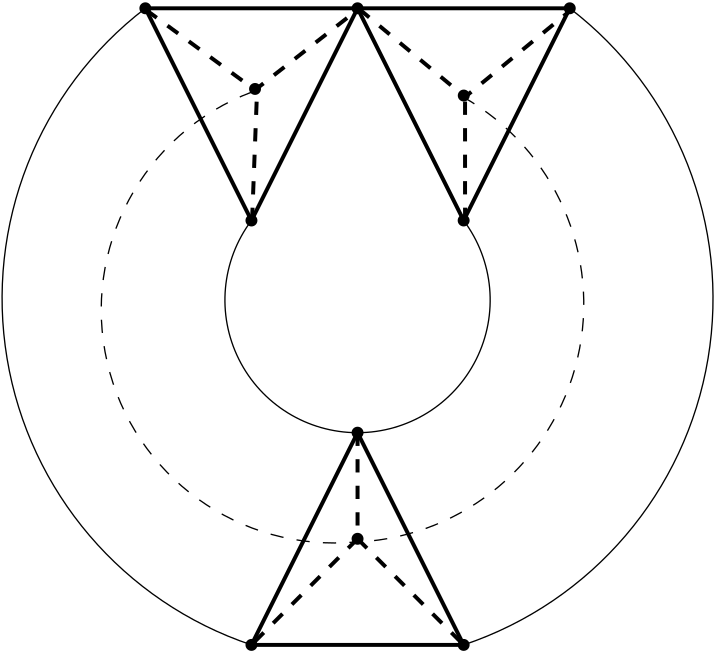}
    \caption{A loop of 3-simplices obtained by gluing together 3-simplices in a linear way along faces and then identifying a vertex of the first 3-simplex with a vertex of the last 3-simplex.}
    \label{fig:falseproof}
\end{figure}

The $n$-dimensional version of \autoref{3colour} is open. The statement is as follows.

\begin{con}
    Let $C$ be a triangulation of $S^d$. Then the 1-skeleton of C is colourable with $d+1$ colours if and only if all its $(d-2)$-faces are incident with an even number of $(d-1)$-faces.
\end{con} 

In a future note \cite{4colour3D} we discuss edge colourings of general 2-complexes and give an upper bound for the chromatic number of embeddable general 2-complexes.

\bibliographystyle{plain}
\bibliography{literatur}

\end{document}